\title[$n$-ary semigroups polynomial-derived from integral domains]{A description of $n$-ary semigroups polynomial-derived from integral domains}\thanks{Communicated by Mikhail Volkov.}
\author{Jean-Luc Marichal}\thanks{Corresponding author: Jean-Luc Marichal (Phone: +352 466644-6662).}
\address{Mathematics Research Unit, FSTC, University of Luxembourg \\
6, rue Coudenhove-Kalergi, L-1359 Luxembourg, Luxembourg}
\email{jean-luc.marichal[at]uni.lu }
\author{Pierre Mathonet}
\address{Mathematics Research Unit, FSTC, University of Luxembourg \\
6, rue Coudenhove-Kalergi, L-1359 Luxembourg, Luxembourg}
\email{pierre.mathonet[at]uni.lu }
\date{January 27, 2011}
\begin{document}

\theoremstyle{plain}
\newtheorem{theorem}{Theorem}
\newtheorem{lemma}[theorem]{Lemma}
\newtheorem{proposition}[theorem]{Proposition}
\newtheorem{corollary}[theorem]{Corollary}
\newtheorem{fact}[theorem]{Fact}
\newtheorem*{main}{Main Theorem}

\theoremstyle{definition}
\newtheorem{definition}[theorem]{Definition}
\newtheorem{example}[theorem]{Example}

\theoremstyle{remark}
\newtheorem*{conjecture}{Conjecture}
\newtheorem{remark}{Remark}
\newtheorem{claim}{Claim}

\newcommand{\N}{\mathbb{N}}
\newcommand{\Z}{\mathbb{Z}}
\newcommand{\Q}{\mathbb{Q}}
\newcommand{\R}{\mathbb{R}}
\newcommand{\bfx}{\mathbf{x}}

\begin{abstract}
We provide a complete classification of the $n$-ary semigroup structures defined by polynomial functions over infinite commutative integral domains
with identity, thus generalizing G{\l}azek and Gleichgewicht's classification of the corresponding ternary semigroups.
\end{abstract}

\keywords{$n$-ary semigroup, $n$-ary group, associativity, polynomial function}

\subjclass[2010]{20N15}

\maketitle

\section{Introduction}

Let $R$ be an infinite commutative integral domain with identity and let $n\geqslant 2$ be an integer. In this note we provide a complete
description of all the $n$-ary semigroup structures defined by polynomial functions over $R$ (i.e., the $n$-ary semigroup structures
polynomial-derived from $R$).

For any integer $k\geqslant 1$, let $[k]=\{1,\ldots,k\}$. Recall that a function $f\colon R^n\to R$ is said to be \emph{associative} if it
solves the following system of $n-1$ functional equations:
\begin{eqnarray}
\lefteqn{f(x_1\ldots, f(x_i,\ldots,x_{i+n-1}),\ldots,x_{2n-1})}\nonumber\\
&=& f(x_1\ldots, f(x_{i+1},\ldots,x_{i+n}),\ldots,x_{2n-1}),\qquad i\in [n-1].\label{eq:assoc}
\end{eqnarray}
In this case, the pair $(R,f)$ is called an \emph{$n$-ary semigroup}.

The introduction of $n$-ary semigroups goes back to D\"ornte \cite{Do1928} and led to the generalization of groups to $n$-ary groups (polyadic groups). The next extensive study on polyadic groups was due to Post \cite{Post2} and was followed by several contributions towards the description of $n$-ary groups and similar structures. To mention a few, see \cite{Dudek95,Dudek01,DGG77,G,GG67,H63,MonkSioson71,Zup67}.

We now state our main result, which provides a description of the possible associative polynomial functions from $R^n$ to
$R$. Let $\mathrm{Frac}(R)$ denote the fraction field of $R$ and let $\bfx=(x_1,\ldots,x_n)$.

\begin{main}\label{thm:111}
A polynomial function $p\colon R^n\to R$ is associative if and only if it is one of the following functions:
\begin{enumerate}
\item[$(i)$] $p(\bfx)=c$, where $c\in R$,

\item[$(ii)$] $p(\bfx)=x_1$,

\item[$(iii)$] $p(\bfx)=x_n$,

\item[$(iv)$] $p(\bfx)=c+\sum_{i=1}^n x_i$, where $c\in R$,

\item[$(v)$] $p(\bfx)=\sum_{i=1}^n \omega^{i-1}\, x_i$ (if $n\geqslant 3$), where $\omega\in R\setminus\{1\}$ satisfies $\omega^{n-1}=1$,

\item[$(vi)$] $p(\bfx)=-b+a\prod_{i=1}^n(x_i+b)$, where $a\in R\setminus\{0\}$ and $b\in\mathrm{Frac}(R)$ satisfy $ab^n-b\in R$ and $ab^k\in R$
for every $k\in [n-1]$.
\end{enumerate}
\end{main}

The Main Theorem shows that the associative polynomial functions of degree greater than $1$ are symmetric, i.e., invariant under any
permutation of the variables.

\begin{example}
The third-degree polynomial function $p\colon\Z^3\to\Z$ defined on the ring $\Z$ of integers by
\[
p(x_1,x_2,x_3)=9\, x_1x_2x_3+3\, (x_1x_2+x_2x_3+x_3x_1)+x_1+x_2+x_3
\]
is associative since it is the restriction to $\Z$ of the associative polynomial function $q\colon\Q^3\to\Q$ defined on the field $\Q$ of
rationals by
\[
q(x_1,x_2,x_3)=-\frac{1}{3}+9\prod_{i=1}^3\Big(x_i+\frac{1}{3}\Big).
\]
\end{example}

The classification given in the Main Theorem was already obtained for ternary semigroups (i.e., when $n=3$) by G{\l}azek and Gleichgewicht
\cite{GG85}. Surprisingly, the classification for arbitrary $n$ remains essentially the same except for certain solutions of type $(v)$ (already
mentioned in \cite{Do1928}), whose existence is subordinate to that of nontrivial roots of unity. Note that, when $n$ is odd, $(v)$ always provides
the solution
$$
p(\bfx)=\sum_{i=1}^n (-1)^{i-1}\, x_i\, ,
$$
which was the unique solution of type $(v)$ found in \cite{GG85} for $n=3$.

%

In Section 2 we give the proof of the Main Theorem. In Section 3 we analyze some properties of these $n$-ary semigroup structures: we show that they are medial, determine the $n$-ary groups
defined by polynomial functions, and discuss irreducibility issues for these $n$-ary semigroups.

\section{Technicalities and proof of the Main Theorem}

Throughout this section, with every function $f\colon R^n\to R$ we associate $n$ functions $f_i\colon R^{2n-1}\to R$, $i\in [n]$, defined by
\begin{equation}\label{eq:qi}
f_i(x_1,\ldots,x_{2n-1})=f(x_1\ldots, f(x_i,\ldots,x_{i+n-1}),\ldots,x_{2n-1}).
\end{equation}
It follows that $f$ is associative if and only if $f_1=\cdots =f_n$.

It is clear that the definition of $R$ enables us to identify the ring $R[x_1,\ldots,x_n]$ of polynomials
of $n$ indeterminates over $R$ with the ring of polynomial functions of $n$ variables from $R^n$ to $R$. Recall that, for any integer
$d\geqslant 0$, a polynomial function $p\colon R^n\to R$ of degree $\leqslant d$ can be written as
$$
p(\bfx)=\sum_{j_1+\cdots + j_n\leqslant d}c_{j_1,\ldots,j_n}\, x_1^{j_1}\cdots\, x_n^{j_n},\qquad c_{j_1,\ldots,j_n}\in R.
$$
For every $i\in [n]$, we denote the degree of $p$ in $x_i$ by $\mathrm{deg}(p,x_i)$. We also denote the degree of $p$ by $\mathrm{deg}(p)$.

\begin{proposition}\label{prop:degrees}
For every associative polynomial function $p\colon R^n\to R$, we have $\mathrm{deg}(p,x_i)\leqslant 1$ for every $i\in [n]$. Moreover, if
$\mathrm{deg}(p,x_n)=0$ (resp.\ $\mathrm{deg}(p,x_1)=0$), then $p$ is either a constant or the projection on the first (resp.\ the last)
coordinate.
\end{proposition}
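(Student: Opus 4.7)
The plan is to analyze, for each $i \in [n]$ and each $k \in [2n-1]$, the partial degree $\deg(f_i, x_k)$ in terms of the partial degrees $d_k := \deg(p, x_k)$ of $p$ itself, and then to use the identities $f_1 = \cdots = f_n$ to force each $d_k \leqslant 1$. Because $R$ is an integral domain, so is $R[x_1,\ldots,x_{2n-1}]$, which is what guarantees that partial degrees multiply (rather than merely bound) under polynomial substitution. Depending on how $x_k$ enters $f_i$---either through the outer $p$ on the lower side ($k < i$), through the inner $p$ whose value is substituted into the $i$-th slot of the outer $p$ ($i \leqslant k \leqslant i+n-1$), or through the outer $p$ on the upper side ($k > i+n-1$)---one obtains the three-zone formula
$$
\deg(f_i, x_k) =
\begin{cases}
d_k & \text{if } 1 \leqslant k \leqslant i-1,\\
d_{k-i+1}\, d_i & \text{if } i \leqslant k \leqslant i+n-1,\\
d_{k-n+1} & \text{if } i+n \leqslant k \leqslant 2n-1.
\end{cases}
$$

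Next, I would extract the needed constraints from a few well-chosen comparisons. Equating $\deg(f_1, x_k)$ with $\deg(f_n, x_k)$ at $k = 1$ and $k = 2n-1$ yields $d_1^2 = d_1$ and $d_n^2 = d_n$, so $d_1, d_n \in \{0, 1\}$; doing the same at $k \in [2, n-1]$ and at $k \in [n+1, 2n-2]$ yields $d_j\, d_1 = d_j$ and $d_j\, d_n = d_j$ respectively, for $j \in [2, n-1]$. Hence whenever $d_1 = 0$ or $d_n = 0$, the middle degrees $d_2, \ldots, d_{n-1}$ all collapse to zero. In the remaining case $d_1 = 1$, comparing $\deg(f_1, x_k)$ with $\deg(f_{k-1}, x_k)$ for $k = 2, \ldots, n$ gives the recurrence $d_k = d_2\, d_{k-1}$ and hence $d_k = d_2^{k-1}$. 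Since $d_n \leqslant 1$ is already known, this forces $d_2 \leqslant 1$, so $d_k \leqslant 1$ for every $k \in [n]$, completing the first assertion.

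For the second assertion, if $d_n = 0$ the reductions above leave $p(\bfx) = a\, x_1 + b$ for some $a, b \in R$. Plugging this into $f_1 = f_n$ gives $a^2 x_1 + ab + b = a\, x_1 + b$, whence $a(a-1) = 0$ and $a\, b = 0$; since $R$ is an integral domain, either $a = 0$ (and $p \equiv b$ is constant) or $a = 1$ and then $b = 0$ (and $p(\bfx) = x_1$ is the projection on the first coordinate). The case $d_1 = 0$ is entirely symmetric. The main obstacle here is essentially bookkeeping: setting up the three-zone degree formula correctly and identifying the comparisons $f_i = f_j$ that isolate precisely the required constraints on the $d_k$.
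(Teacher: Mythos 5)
Your proposal is correct and follows essentially the same route as the paper: both arguments compare the partial degrees of the composed functions $f_i$, using the integral-domain property to make degrees multiply exactly under substitution, derive $d_1^2=d_1$ and $d_n^2=d_n$, force the remaining $d_k\leqslant 1$, and finish the degenerate cases by writing $p(\mathbf{x})=ax_1+b$ and solving $a^2=a$, $ab=0$. The only (immaterial) difference is the choice of comparisons in the case $d_1=d_n=1$: the paper uses $d_id_{n-i+1}=d_1d_n$, while you use the recurrence $d_k=d_2d_{k-1}$ giving $d_k=d_2^{k-1}$ and then $d_2^{n-1}=d_n\leqslant 1$.
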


\begin{proof}
Let $p\colon R^n\to R$ be an associative polynomial function and let $p_1,\ldots,p_n$ be the polynomial functions associated with $p$ as defined
in (\ref{eq:qi}). For every $i\in [n]$, we let $d_i=\mathrm{deg}(p,x_i)$. By associativity, we have
$$
\begin{array}{ll}
  d_1=\mathrm{deg}(p_i,x_1)=\mathrm{deg}(p_1,x_1)=d_1^2\, , & i\in [n]\setminus\{1\},\\
  d_n=\mathrm{deg}(p_i,x_{2n-1})=\mathrm{deg}(p_n,x_{2n-1})=d_n^2\, , & i\in [n]\setminus\{n\},
\end{array}
$$
which shows that $d_1\leqslant 1$ and $d_n\leqslant 1$.

Again by associativity, we have
$$
\begin{array}{ll}
  d_id_{n-i+1}=\mathrm{deg}(p_i,x_n)=\mathrm{deg}(p_1,x_n)=d_1d_{n}\, , & i\in [n],\\
  d_i=\mathrm{deg}(p_{i+1},x_i)=\mathrm{deg}(p_i,x_i)=d_1d_{i}\, , & i\in [n-1],\\
  d_{i}=\mathrm{deg}(p_{i-1},x_{n+i-1})=\mathrm{deg}(p_i,x_{n+i-1})=d_id_{n}\, , & i\in [n]\setminus\{1\}.
\end{array}
$$
If $d_1=d_n=1$, then the first set of equations shows that $d_i=1$ for every $i\in [n]$. If $d_n=0$, then the third set of equations shows that
$p$ is of the form $p(\bfx)=c_1x_1+c_0$ and hence we can conclude immediately. We proceed similarly if $d_1=0$.
\end{proof}

By Proposition \ref{prop:degrees} an associative polynomial function $p\colon R^n\to R$ can always be written in the form
$$
p(\bfx)=\sum_{j_1,\ldots,j_n\in\{0,1\}}c_{j_1,\ldots,j_n}\, x_1^{j_1}\cdots\, x_n^{j_n},\qquad c_{j_1,\ldots,j_n}\in R.
$$
Using subsets of $[n]$ instead of Boolean indexes, we obtain
\begin{equation}\label{eq:sdfa987}
p(\bfx)=\sum_{J\subseteq [n]}c_J\,\prod_{j\in J}x_j,\qquad c_J\in R.
\end{equation}

In order to prove the Main Theorem, we only need to describe the class of associative polynomial functions of the form (\ref{eq:sdfa987}). 

To avoid cumbersome notation, for every subset $S=\{j_1,\ldots,j_k\}$ of integers and every integer $m$, we set $S+m=\{j_1+m,\ldots,j_k+m\}$.
Also, for every $i\in [n]$, we let
\begin{eqnarray*}
A_i &=& \{1,\ldots,i-1\} ~=~ [i-1],\\
B_i &=& \{i,\ldots,i+n-1\} ~=~ [n]+i-1,\\
C_i &=& \{i+n,\ldots,2n-1\} ~=~ [n-i]+n+i-1,
\end{eqnarray*}
with the convention that $A_1=C_n=\varnothing$.

\begin{lemma}\label{lemma:8sdf7}
If $p\colon R^n\to R$ is of the form (\ref{eq:sdfa987}), then for every $i\in [n]$ the associated function $p_i\colon R^{2n-1}\to R$ is of the
form
\begin{equation}\label{eq:ssdf53g}
p_i(x_1,\ldots,x_{2n-1})=\sum_{S\subseteq [2n-1]}r^i_S\,\prod_{j\in S}x_j
\end{equation}
and its coefficients are given in terms of those of $p$ by
$$
r^i_S=
\begin{cases}
c_{J_S^i\cup\{i\}}\, c_{K_S^i}\, , & \mbox{if $S\cap B_i\neq\varnothing$},\\
c_{J_S^i\cup\{i\}}\, c_{\varnothing}+c_{J_S^i}\, , & \mbox{otherwise},
\end{cases}
$$
where $J_S^i=(S\cap A_i)\cup((S\cap C_i)-n+1)$ and $K_S^i=(S\cap B_i)-i+1$.
\end{lemma}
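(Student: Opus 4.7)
The plan is to compute $p_i$ by a direct substitution and expansion, exploiting the fact that $p$ is multilinear in the form (\ref{eq:sdfa987}). Starting from the definition (\ref{eq:qi}), I substitute (\ref{eq:sdfa987}) for the outer $p$ and expand as a sum over subsets $J\subseteq [n]$, splitting this sum according to whether $i\notin J$ or $i\in J$. When $i\notin J$, the corresponding monomial is $c_J\prod_{j\in J,\,j<i}x_j\prod_{j\in J,\,j>i}x_{j+n-1}$, whose variables all lie in $A_i\cup C_i$. When $i\in J$, the monomial contains the factor $p(x_i,\ldots,x_{i+n-1})$, which I then expand in turn via (\ref{eq:sdfa987}) as a sum over $K\subseteq [n]$, producing variables in $B_i$ (after the shift $k\mapsto k+i-1$). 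Since the resulting expression is visibly multilinear in each of the variables $x_1,\ldots,x_{2n-1}$, this already gives the form (\ref{eq:ssdf53g}).

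Next, I fix $S\subseteq [2n-1]$ and collect the contributions to the coefficient of $\prod_{j\in S}x_j$. Matching the variables from $A_i$ forces $J\cap[i-1]=S\cap A_i$; matching those from $C_i$ forces $\{j\in J:j>i\}=(S\cap C_i)-n+1$, so that in either branch $J\setminus\{i\}=J_S^i$. In the $i\in J$ branch, matching the variables from $B_i$ forces $K=K_S^i$, whereas the $i\notin J$ branch produces no variable in $B_i$ at all. Consequently, if $S\cap B_i\neq\varnothing$ only the $i\in J$ branch can contribute and it gives $r^i_S=c_{J_S^i\cup\{i\}}\,c_{K_S^i}$; if $S\cap B_i=\varnothing$ both branches contribute, with $K=\varnothing$ forced in the second, which yields $r^i_S=c_{J_S^i\cup\{i\}}\,c_\varnothing+c_{J_S^i}$. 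This is exactly the formula claimed.

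The only real obstacle is bookkeeping: one must consistently track the two index shifts (by $i-1$ on $B_i$ and by $n-1$ on $C_i$) and verify that $A_i$, $B_i$, $C_i$ partition $[2n-1]$, so that each monomial of $p_i$ is produced exactly once and the case split on $S\cap B_i$ exhausts all possibilities. No further algebraic input is needed; the proof is a careful expansion of a composition of two multilinear polynomials, with the coefficient formula read off term by term.
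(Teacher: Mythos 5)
Your proposal is correct and follows essentially the same route as the paper's proof: expand the outer $p$ over $J\subseteq[n]$, split on whether $i\in J$, expand the inner $p(x_i,\ldots,x_{i+n-1})$ over $K$ with the shift $k\mapsto k+i-1$, and read off the coefficient of $\prod_{j\in S}x_j$ using the partition $[2n-1]=A_i\cup B_i\cup C_i$. Your write-up just makes the final coefficient-matching step slightly more explicit than the paper does.
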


\begin{proof}
We first note that
\[p(x_i,\ldots,x_{n+i-1})=\sum_{K\subseteq [n]}c_K\prod_{k\in K}x_{k+i-1}=\sum_{K\subseteq [n]}c_K\prod_{k\in K+i-1}x_{k}.\]
Then, partitioning $J\subseteq [n]$ into $J\cap A_i$, $J\cap\{i\}$, and $J\cap (C_i-n+1)$, we obtain
\begin{eqnarray*}
p_i(x_1,\ldots,x_{2n-1})
&=& \sum_{J\subseteq [n]}c_J\,\prod_{j\in J\cap A_i}x_j\prod_{j\in (J+n-1)\cap C_i}x_j\prod_{j\in J\cap\{i\}}\bigg(\sum_{K\subseteq [n]}c_K\,\prod_{k\in K+i-1}x_k\bigg)\\
&=& \sum_{J\subseteq [n],J\ni i}~\sum_{K\subseteq [n]}c_J\, c_K\,\prod_{j\in J\cap A_i}x_j\prod_{j\in (J+n-1)\cap C_i}x_j\prod_{k\in
K+i-1}x_k\\
&& \null + \sum_{J\subseteq [n],J\not\ni i}c_J\, \prod_{j\in J\cap A_i}x_j\prod_{j\in (J+n-1)\cap C_i}x_j.
\end{eqnarray*}
The result is then obtained by reading the coefficient of $\prod_{i\in S}x_i$ in the latter expression.
\end{proof}

\begin{proposition}\label{degone}
Let $p\colon R^n\to R$ be an associative polynomial function of the form (\ref{eq:sdfa987}). If $c_{[n]}=0$, then $\mathrm{deg}(p)\leqslant 1$.
\end{proposition}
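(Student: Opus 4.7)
The plan is to proceed by contradiction: assume $d := \deg(p) \geq 2$ and show that $c_J = 0$ for every $J \subseteq [n]$ with $|J| = d$, contradicting the definition of $d$. Since $d = \deg(p)$, $c_K = 0$ already holds for every $K$ with $|K| > d$, and $c_{[n]} = 0$ by hypothesis, so it suffices to treat $J \subsetneq [n]$ of size $d$.

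The central device: for any $J \subsetneq [n]$ with $|J| = d$ and any $i \in J$, set
\[
S = (J \cap [i-1]) \cup (J + i - 1) \cup ((J \cap [i+1,n]) + n - 1).
\]
Then $|S| = 2d - 1$, and a direct application of Lemma \ref{lemma:8sdf7} gives $r^i_S = c_J^2$. So it suffices to exhibit $j \in [n]$ with $r^j_S = 0$; associativity together with the integral domain hypothesis will then force $c_J = 0$. The key observation is that, for $|S| = 2d - 1$, Lemma \ref{lemma:8sdf7} expresses $r^j_S$ in terms of the coefficients $c_{J^j_S \cup \{j\}}$ and $c_{K^j_S}$ (or, when $S \cap B_j = \varnothing$, also $c_{J^j_S}$), whose index sets satisfy a cardinality-balance relation of the form $|J^j_S \cup \{j\}| + |K^j_S| = 2d$; hence $r^j_S = 0$ whenever any such index set has cardinality strictly greater than $d$.

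I would divide the argument into three cases according to the position of $J$ within $[n]$. Case 1 ($1 \in J$): take $i = 1$, so $S = J \cup ((J \setminus \{1\}) + n - 1)$, and pick any $k \in [n] \setminus J$ (which exists since $J \neq [n]$); a short computation gives $J^k_S \cup \{k\} = J \cup \{k\}$, of cardinality $d + 1$, so $r^k_S = 0$. Case 2 ($n \in J$ and $1 \notin J$) is completely symmetric. Case 3 ($1, n \notin J$, so $J \subseteq \{2, \ldots, n-1\}$): take any $i \in J$ and $j = 1$. Using $|J^1_S| + |K^1_S| = 2d - 1$, if either $|J^1_S \cup \{1\}| > d$ or $|K^1_S| > d$ then $r^1_S = 0$ for degree reasons; otherwise one has $|J^1_S \cup \{1\}| = d$, and this subset contains $1$, avoids $n$ (since $J^1_S \subseteq \{2, \ldots, n-1\}$), and differs from $[n]$, so by Case 1 (already proven) $c_{J^1_S \cup \{1\}} = 0$ and again $r^1_S = 0$.

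The main obstacle is Case 3: a plain max-degree comparison does not produce any vanishing $r^j_S$ for typical $i \in J$ in this range, because $S$ is often distributed so that $|S \cap B_j| = d$ for every $j$; one must bootstrap on Case 1 to eliminate the remaining coefficient. Once the three cases are in place, every $c_J$ with $|J| = d$ vanishes, contradicting $d = \deg(p) \geq 2$; hence $\deg(p) \leq 1$.
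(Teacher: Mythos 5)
Your proposal is correct and follows essentially the same route as the paper's proof: the same witness set $S$ (your formula specializes to the paper's choices when $i=1$ or $i=\min(J)$), the same computation $r^i_S=c_J^2$ via Lemma~\ref{lemma:8sdf7}, and the same strategy of comparing with an index at which the coefficient vanishes, with your Case~3 bootstrapping on Case~1 exactly as the paper's case $(ii)$ bootstraps on its case $(i)$. The differences are organizational rather than substantive --- you argue by contradiction at the top degree $d$ instead of by downward induction on $|J|$, and in the interior case you compare with $r^1_S$ rather than $r^{2\ell-1}_S$, which spares you the paper's split at $(n+1)/2$ --- though you should add the one-line check (which the paper makes explicitly) that in Case~1 one has $S\cap B_k\neq\varnothing$ (true because $|J|\geqslant 2$), since otherwise $r^k_S$ would equal $c_{J\cup\{k\}}c_{\varnothing}+c_J=c_J$ rather than the product $c_{J\cup\{k\}}c_{K^k_S}=0$.
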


\begin{proof}
We assume that $c_{[n]}=0$ and prove by induction that $c_J=0$ for every $J\subseteq [n]$ such that $|J|\geqslant 2$. Suppose that $c_J=0$ for
every $J\subseteq [n]$ such that $|J|\geqslant k$ for some $k\geqslant 3$. Fix $J_0\subseteq [n]$ such that $|J_0|=k-1$. We only need to show
that $c_{J_0}=0$.

Assume first that $\ell=\min(J_0)\leqslant (n+1)/2$.
\begin{enumerate}
\item[$(i)$] Case $\ell =1$. Let
$$
S=J_0\cup\big(\big(J_0+n-1\big)\setminus \{n\}\big)\subseteq [2n-1].
$$
We have $S\cap A_1=\varnothing$, $S\cap B_1=J_0$, and $(S\cap C_1)-n+1=J_0\setminus\{1\}$. By Lemma~\ref{lemma:8sdf7}, we have
$r_S^1=c_{J_0}^2$. Setting $m=\min([n]\setminus J_0)$,\footnote{In fact, $m=\mathrm{mex}_{[n]}(J_0)$, where `$\mathrm{mex}$' stands for the
\emph{minimal excluded number}, well known in combinatorial game theory.} we also have $|S\cap A_m|=|A_m|=m-1$ and $|(S\cap
C_m)-n+1|=|J_0|-(m-1)$. Moreover, $S\cap B_m\neq\varnothing$ for otherwise we would have $J_0=\{1\}$, which contradicts $|J_0|\geqslant 2$.
Thus, using Lemma~\ref{lemma:8sdf7}, associativity, and the induction hypothesis, we have $r_S^m=0$ and therefore
$$
c_{J_0}^2=r_S^1=r_S^m=0.
$$

\item[$(ii)$] Case $1<\ell\leqslant (n+1)/2$. Let
$$
S=(J_0+\ell -1)\cup\big((J_0+n-1)\setminus \{n+\ell -1\}\big)\subseteq [2n-1].
$$
We proceed as above to obtain $r_S^{\ell}=c^2_{J_0}$. By associativity it is then sufficient to show that $r_S^{2\ell -1}=0$. Using the notation
of Lemma~\ref{lemma:8sdf7}, we can readily see that $|K_S^{2\ell -1}|\geqslant |J_0|$. Hence by Lemma~\ref{lemma:8sdf7} we only need to show
that $c_{K_S^{2\ell -1}}=0$. If $|K_S^{2\ell -1}|>|J_0|$, then we conclude by using the induction hypothesis. If $|K_S^{2\ell -1}|=|J_0|$, then
we can apply case $(i)$ since $\min(K_S^{2\ell -1})=1$.
\end{enumerate}
If $\ell >(n+1)/2$, we proceed symmetrically by setting $\ell'=\max(J)$ and considering the cases $\ell'=n$ and $(n+1)/2\leqslant\ell'<n$
separately.
\end{proof}

\begin{proposition}\label{linearassoc}
A polynomial function $p\colon R^n\to R$ of the form (\ref{eq:sdfa987}) with $c_{[n]}=0$ is associative if and only if it is one of the
functions of types $(i)$--$(v)$.
\end{proposition}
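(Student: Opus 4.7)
By Proposition \ref{degone}, the hypothesis $c_{[n]}=0$ combined with associativity forces $\mathrm{deg}(p)\leqslant 1$, so any associative $p$ of the form (\ref{eq:sdfa987}) is necessarily affine: $p(\bfx)=c_0+\sum_{i=1}^n c_ix_i$. The statement thus reduces to classifying the associative affine polynomial functions. Sufficiency is direct substitution in (\ref{eq:qi}): for types $(i)$--$(iv)$ one checks $p_1=\cdots=p_n$ immediately, and for $(v)$ one uses the identity $\omega^{k-n}=\omega^{k-1}$, which holds precisely because $\omega^{n-1}=1$, to reconcile the exponents on variables sitting to the right of the inner $p$ with those on the left.

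For necessity, the plan is to expand $p_i$ explicitly. A direct computation gives, for $i<n$,
$$
p_i=c_0(1+c_i)+\sum_{j=1}^{i-1}c_jx_j+c_i\sum_{s=1}^n c_sx_{i+s-1}+\sum_{j=i+1}^n c_jx_{j+n-1},
$$
with an analogous formula for $p_n$. Setting $p_i=p_{i+1}$ for each $i\in[n-1]$ and reading off the coefficient of every $x_k$ then produces the system: $c_i(c_1-1)=0$ for $i\in[n-1]$; $c_j(c_n-1)=0$ for $j\in[n]\setminus\{1\}$; $c_ic_{s+1}=c_{i+1}c_s$ for $i,s\in[n-1]$; and $c_0(c_i-c_j)=0$ for all $i,j\in[n]$.

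The remainder is a case analysis, exploiting the integral-domain hypothesis to cancel nonzero factors. If $c_1=\cdots=c_n=0$ we land in type $(i)$. If only $c_n\neq 0$, the second and fourth families force $c_n=1$ and $c_0=0$, giving type $(iii)$. Otherwise some $c_i$ with $i\in[n-1]$ is nonzero; the first family then forces $c_1=1$ (in particular $c_1\neq 0$), and the third family, specialized to $i=1$, yields the recurrence $c_{s+1}=c_2c_s$, hence $c_s=\omega^{s-1}$ with $\omega=c_2$. Splitting on $\omega$ exhausts the cases: $\omega=0$ collapses to $p=x_1$, type $(ii)$, after the fourth family forces $c_0=0$; $\omega=1$ gives type $(iv)$; and $\omega\notin\{0,1\}$ forces $\omega^{n-1}=1$ via the second family (so necessarily $n\geqslant 3$) and $c_0=0$ via the fourth, producing type $(v)$.

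The argument is conceptually straightforward but computationally delicate; the main obstacle is disciplined bookkeeping, together with careful use of the integral-domain hypothesis to cancel nonzero elements. The key branching points are $c_n\in\{0,1\}$ and $\omega\in\{0,1,\text{a nontrivial root of unity}\}$, which is precisely where the five families of solutions separate.
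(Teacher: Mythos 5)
Your proof is correct and follows essentially the same route as the paper: reduce to the affine case via Proposition~\ref{degone}, expand the functions $p_i$, and compare coefficients to obtain $c_1^2=c_1$, $c_n^2=c_n$, the recurrence $c_ic_{s+1}=c_{i+1}c_s$ (hence $c_s=c_2^{s-1}$ and $c_2^{n-1}=1$), and the constant-term condition forcing $c_0=0$ unless all $c_i$ coincide. The only cosmetic difference is that the paper dispatches the degenerate cases $c_1=0$ or $c_n=0$ by invoking Proposition~\ref{prop:degrees}, whereas you resolve them directly inside your system of coefficient equations.
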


\begin{proof}
It is straightforward to see that the functions of types $(i)$--$(v)$ are associative polynomial functions.

Now, by Proposition~\ref{degone} the polynomial function $p$ has the form
\[
p(\bfx)=c_0+\sum_{i=1}^nc_i x_i\, ,\qquad c_0,\ldots,c_n\in R.
\]
Comparing the coefficients of $x_1$ in $p_1$ and $p_2$, we obtain the equation $c_1^2=c_1$. Similarly, we show that $c_n^2=c_n$. If $c_1=0$ or
$c_n=0$, we conclude by Proposition~\ref{prop:degrees}. Thus we can assume that $c_1=c_n=1$. Comparing the coefficients of $x_i$ in $p_i$ and
$p_{i-1}$ for $2\leqslant i\leqslant n$, we obtain the equations $c_1c_i=c_2c_{i-1}$, or equivalently, $c_{i}=c_2^{i-1}$ and $c_2^{n-1}=1$.
Finally, since the constant term in $p_i$ is $c_0+c_ic_0$, we must have $c_0=0$ unless $c_1=\cdots =c_n$.
\end{proof}

\begin{lemma}\label{lemma:sym}
Let $p\colon R^n\to R$ be an associative polynomial function of the form (\ref{eq:sdfa987}). If $c_{[n]}\neq 0$, then $p$ is a symmetric
function.
\end{lemma}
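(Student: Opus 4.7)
I would prove the lemma by descending induction on $k=|J|$, showing that $c_J$ depends only on its size. The base case $k=n$ is trivial since $[n]$ is the only subset of that size.

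For the inductive step, fix $k$ with $1\le k\le n-1$, assume $c_J=\gamma_{|J|}$ for every $J$ with $|J|>k$, and let $J_0\subseteq [n]$ satisfy $|J_0|=k$. Since $J_0$ and $[n]\setminus J_0$ are both nonempty subsets of $[n]$, there exist adjacent indices $i\in J_0$ and $j\in [n]\setminus J_0$ with $|i-j|=1$. I set
\[
S=(J_0\cap [i-1])\cup B_i\cup ((J_0\setminus [i])+(n-1))\subseteq [2n-1].
\]
A direct verification from Lemma~\ref{lemma:8sdf7} gives $K_S^i=[n]$ and $J_S^i=J_0\setminus\{i\}$, so $r^i_S=c_{J_0}\cdot c_{[n]}$. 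The adjacency of $i$ and $j$ forces $|S\cap B_j|=n-1$ (the missing element of $B_j$ is precisely $j$ when $j<i$ and $j+n-1$ when $j>i$); consequently $|K_S^j|=n-1$, $|J_S^j\cup\{j\}|=k+1$, and $S\cap B_j\ne\varnothing$. Lemma~\ref{lemma:8sdf7} then yields $r^j_S=c_{J_S^j\cup\{j\}}\cdot c_{K_S^j}$, and associativity $r^i_S=r^j_S$ produces the identity
\[
c_{J_0}\cdot c_{[n]}=c_{J_S^j\cup\{j\}}\cdot c_{K_S^j}.
\]

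If $k\le n-2$, both factors on the right have size exceeding $k$, so by the inductive hypothesis they equal $\gamma_{k+1}$ and $\gamma_{n-1}$; the right-hand side depends only on $k$. Since $c_{[n]}\ne 0$ and $R$ is an integral domain, $c_{J_0}$ is determined by this equation as a function of $k$ alone, proving symmetry at level $k$.

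The main obstacle is the case $k=n-1$, in which $|K_S^j|=k$ lies beyond the reach of the inductive hypothesis. There $|J_S^j\cup\{j\}|=n$ forces $c_{J_S^j\cup\{j\}}=c_{[n]}$, so the identity reduces to $c_{J_0}=c_{K_S^j}$. Explicit computation shows that for $J_0=[n]\setminus\{j_0\}$, the choice $(i,j)=(j_0+1,j_0)$ (available when $j_0\le n-1$) yields $K_S^j=[n]\setminus\{1\}$, while $(i,j)=(j_0-1,j_0)$ (available when $j_0\ge 2$) yields $K_S^j=[n]\setminus\{n\}$. For $n\ge 3$ and any $j_0\in [2,n-1]$ both choices apply, giving $c_{[n]\setminus\{1\}}=c_{[n]\setminus\{n\}}$ and hence equality of all size-$(n-1)$ coefficients. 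The remaining case $n=2$ I would dispatch separately with $S=\{1,3\}$: Lemma~\ref{lemma:8sdf7} gives $r^1_S=c_{\{1,2\}}c_{\{1\}}$ and $r^2_S=c_{\{1,2\}}c_{\{2\}}$, so $c_{\{1\}}=c_{\{2\}}$ by cancellation of $c_{\{1,2\}}\ne 0$.
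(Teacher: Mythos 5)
Your proof is correct and follows essentially the same strategy as the paper's: a descending induction on $|J|$ in which a suitably chosen $S\subseteq[2n-1]$ is fed into Lemma~\ref{lemma:8sdf7}, associativity equates two of the coefficients $r_S^i$, and the factor $c_{[n]}\neq 0$ is cancelled in the integral domain. The only difference lies in the choice of witness sets --- the paper's $S=[2n-1]\setminus\{n\}$ handles all coefficients of size $n-1$ in one stroke (avoiding your $n=2$ case split and the intermediate identity $c_{[n]\setminus\{1\}}=c_{[n]\setminus\{n\}}$), and its $S=J_0\cup C_1$ compared at indices $1$ and $m+1$ plays the role of your adjacent-pair construction --- but the underlying mechanism is identical.
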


\begin{proof}
Let us first prove that $c_J=c_{J'}$ for every $J,J'\in [n]$ such that $|J|=|J'|=n-1$. Setting $S=[2n-1]\setminus\{n\}$, we see by
Lemma~\ref{lemma:8sdf7} that $r_S^i=c_{[n]}c_{[n]\setminus\{n-i+1\}}$ for $i\in[n]$ and we conclude by associativity.

We now proceed by induction. Suppose that $c_J=c_{J'}$ for every $J,J'\in [n]$ such that $|J|=|J'|\geqslant k$ for some $2\leqslant k\leqslant
n-1$ and set $c_{|J|}=c_J$ for every $J\subseteq [n]$ such that $|J|\geqslant k$. Fix $J_0$ such that $|J_0|=k-1$ and set $S=J_0\cup C_1$ and
$m=\min([n]\setminus J_0)\leqslant n-1$. By Lemma~\ref{lemma:8sdf7} and associativity we have
$c_{[n]}c_{J_0}=r_S^1=r_S^{m+1}=c_{n-1}c_{|J_0|+1}$.
\end{proof}

The interest of Lemma~\ref{lemma:sym} is shown by the following obvious result.

\begin{lemma}\label{symassoc}
A symmetric function $f\colon R^n\to R$ is associative if and only if the associated functions $f_1,\ldots,f_n$ satisfy the condition $f_1=f_2$.
\end{lemma}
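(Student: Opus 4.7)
The plan is as follows. The forward direction is immediate from the definition of associativity. For the converse, I would show that, for each $i\in[n-1]$, the equation $f_i=f_{i+1}$ follows directly from the hypothesis $f_1=f_2$ by a change of variables exploiting the symmetry of $f$.

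The key observation is that, since $f$ is symmetric (both as the outer and as the inner application), the value $f_i(x_1,\ldots,x_{2n-1})$ depends on the tuple only through the partition of its entries into the inner block $\{x_j:j\in B_i\}$ and the complementary outer block. Fixing $i$ and a tuple $(x_1,\ldots,x_{2n-1})$, I would pick a permutation $\pi$ of $[2n-1]$ satisfying $\pi(1)=i$, $\pi(n+1)=i+n$, $\pi(\{2,\ldots,n\})=B_i\setminus\{i\}$, and $\pi(\{n+2,\ldots,2n-1\})=[2n-1]\setminus B_i\setminus\{i+n\}$; such a $\pi$ exists since the cardinalities match. Setting $y_k=x_{\pi(k)}$, the hypothesis $f_1(y_1,\ldots,y_{2n-1})=f_2(y_1,\ldots,y_{2n-1})$ can be rewritten using the symmetry of $f$ on each side: on the left, the identity $\pi([n])=B_i$ identifies the expression with $f_i(x_1,\ldots,x_{2n-1})$; on the right, $\pi(\{2,\ldots,n+1\})=B_{i+1}$ identifies it with $f_{i+1}(x_1,\ldots,x_{2n-1})$. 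This gives $f_i=f_{i+1}$.

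Since $i\in[n-1]$ is arbitrary, we obtain $f_1=f_2=\cdots=f_n$, i.e., $f$ is associative. The only bookkeeping that requires care is the choice of $\pi$ and the verification that the symmetry of $f$ legitimately transports both sides; this is why the authors label this lemma as obvious---under symmetry, the single equation $f_1=f_2$ encodes, up to relabeling of arguments, all $n-1$ associativity equations.
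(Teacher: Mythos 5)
Your proof is correct: the permutation $\pi$ is well defined (the four pieces $\{1\}$, $\{2,\ldots,n\}$, $\{n+1\}$, $\{n+2,\ldots,2n-1\}$ and their prescribed images both partition $[2n-1]$ with matching cardinalities), and symmetry of $f$ in both the inner and the outer application legitimately turns $f_1(\mathbf{y})=f_2(\mathbf{y})$ into $f_i(\mathbf{x})=f_{i+1}(\mathbf{x})$. The paper states this lemma without proof, calling it obvious; your argument is exactly the intended justification, spelled out in full.
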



Recall that the $n$-variable \emph{elementary symmetric polynomial functions} of degree $k\leqslant n$ are defined by
\[
P_k(\bfx)=\sum_{K\subseteq [n],\, |K|=k}~\prod_{i\in K}x_i.
\]

\begin{proposition}
A polynomial function $p\colon R^n\to R$ such that $\mathrm{deg}(p)>1$ is associative if and only if it is of the form
\begin{equation}\label{condassoc}
p(\bfx)=\sum_{k=0}^nc_k\, P_k(\bfx),
\end{equation}
where the coefficients $c_k\in R$ satisfy the conditions
\begin{equation}\label{condpol}
c_{j+1}c_k+c_j\delta_{k,0}=c_jc_{k+1}\, ,\qquad j\in [n-1],~ k\in [n]-1.
\end{equation}
\end{proposition}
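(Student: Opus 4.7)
The plan is to reduce to the symmetric case using the earlier results and then to extract the system (\ref{condpol}) by comparing the coefficients of $p_1$ and $p_2$ monomial by monomial.

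Assume $p$ is associative with $\mathrm{deg}(p)>1$. By Proposition~\ref{prop:degrees}, $p$ admits the expansion (\ref{eq:sdfa987}); by the contrapositive of Proposition~\ref{degone}, the top coefficient $c_{[n]}$ is nonzero; and Lemma~\ref{lemma:sym} consequently forces $p$ to be symmetric, so $c_J=c_{|J|}$ for every $J\subseteq[n]$. Regrouping monomials by degree then puts $p$ in the form (\ref{condassoc}), and by Lemma~\ref{symassoc} associativity becomes equivalent to the single identity $p_1=p_2$.

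To translate that identity into the relations (\ref{condpol}), I fix $S\subseteq[2n-1]$ and parameterize it by the four integers
\[
\alpha=|S\cap\{1\}|,\quad \beta=|S\cap\{2,\ldots,n\}|,\quad \gamma=|S\cap\{n+1\}|,\quad \delta=|S\cap\{n+2,\ldots,2n-1\}|,
\]
which range independently over $\{0,1\}\times\{0,\ldots,n-1\}\times\{0,1\}\times\{0,\ldots,n-2\}$. Applying Lemma~\ref{lemma:8sdf7} for $i=1$ (where $A_1=\varnothing$) and for $i=2$ (where $A_2=\{1\}$), and using the symmetry $c_J=c_{|J|}$, each coefficient $r_S^i$ depends only on $(\alpha,\beta,\gamma,\delta)$. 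A short inspection according to the value of $(\alpha,\gamma)$ shows that in the balanced cases $(0,0)$ and $(1,1)$ one automatically has $r_S^1=r_S^2$. In the two mixed cases $(1,0)$ and $(0,1)$, setting $j=\delta+1\in[n-1]$ and $k=\beta\in[n]-1$, the equality $r_S^1=r_S^2$ reads $c_{j+1}c_k=c_jc_{k+1}$ if $\beta>0$ and $c_{j+1}c_0+c_j=c_jc_1$ if $\beta=0$, which are precisely the relations (\ref{condpol}); the Kronecker delta $\delta_{k,0}$ merely packages these two subcases of Lemma~\ref{lemma:8sdf7} into a single formula.

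Conversely, if $p=\sum_k c_k\,P_k$ satisfies (\ref{condpol}), the same case analysis, read backwards, yields $r_S^1=r_S^2$ for every $S\subseteq[2n-1]$, whence $p_1=p_2$ and Lemma~\ref{symassoc} delivers associativity. I expect the main obstacle to lie in the combinatorial bookkeeping: one must carefully track how $S$ is partitioned by the triples $(A_i,B_i,C_i)$ for the two distinct indices $i=1,2$, and verify that only the two asymmetric intersection patterns $(\alpha,\gamma)=(1,0)$ and $(0,1)$ contribute nontrivial relations while the balanced patterns collapse.
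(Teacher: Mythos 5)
Your proposal is correct and follows essentially the same route as the paper: reduce to the symmetric form via Proposition~\ref{degone} and Lemma~\ref{lemma:sym}, invoke Lemma~\ref{symassoc} to replace associativity by $p_1=p_2$, and compare $r_S^1$ with $r_S^2$ by tracking how $S$ meets the blocks $A_i,B_i,C_i$ for $i=1,2$. Your $(\alpha,\beta,\gamma,\delta)$ bookkeeping is just an explicit version of the paper's case split on $(j',k')$ versus $(j,k)$, and your observation that the two mixed cases yield the same relation matches the paper's remark that its two families of conditions are equivalent.
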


\begin{proof}
By Proposition~\ref{linearassoc} and Lemma~\ref{lemma:sym}, any associative polynomial function $p\colon R^n\to R$ such that $\mathrm{deg}(p)>1$
is of the form (\ref{condassoc}). By Lemma~\ref{symassoc}, such a polynomial function is associative if and only if $p_1=p_2$, that is, with the
notation of Lemma~\ref{lemma:8sdf7}, $r^1_S=r^2_S$ for every $S\subseteq [2n-1]$.

Set $j=|S\cap C_1|$, $k=|S\cap B_1|$, $j'=|S\cap A_2|+|S\cap C_2|$, and $k'=|S\cap B_2|$. We have either $j'=j-1$ and $k'=k+1$, or $j'=j+1$ and
$k'=k-1$, or $j'=j$ and $k'=k$. Therefore we get the equations
$$
\begin{array}{ll}
c_{j+1}c_k+c_j\delta_{k,0}=c_jc_{k+1}\, , & j\in[n-1],~ k\in[n]-1,\\
c_{j+2}c_{k-1}+c_{j+1}\delta_{k-1,0}=c_{j+1}c_{k}\, , & j\in[n-1]-1,~ k\in[n].
\end{array}
$$
We conclude by observing that both sets of conditions are equivalent.
\end{proof}

Let us now consider the special case where $R$ is a field.

\begin{proposition}\label{prop:field}
Assume that $R$ is a field. The associative polynomial functions from $R^n$ to $R$ of degree $>1$ are of the form
\begin{equation}\label{pab}
p_{a,b}(\bfx)=-b+a\prod_{i=1}^n(x_i+b),
\end{equation}
where $a\in R\setminus\{0\}$ and $b\in R$.
\end{proposition}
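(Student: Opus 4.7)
The plan is to invoke all the preceding structure results so that $p$ is forced into the form $\sum_{k=0}^{n} c_k P_k(\bfx)$, and then to solve explicitly for the coefficients using the quadratic relations (\ref{condpol}). First I would observe that since $\mathrm{deg}(p)>1$, Proposition~\ref{degone} forces $c_{[n]}\neq 0$, so by Lemma~\ref{lemma:sym} the function $p$ is symmetric; by the previous proposition it therefore has the form (\ref{condassoc}) with coefficients $c_k\in R$ satisfying (\ref{condpol}), and $c_n=c_{[n]}\neq 0$. Setting $a:=c_n$, the ``if'' direction is a direct expansion: using $\prod_{i=1}^n(x_i+b)=\sum_{k=0}^n b^{n-k} P_k(\bfx)$, one sees that $p_{a,b}$ corresponds to the coefficients $c_k=ab^{n-k}$ for $k\in[n]$ and $c_0=-b+ab^n$, which one verifies satisfies (\ref{condpol}).

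For the ``only if'' direction I would exploit the instance of (\ref{condpol}) obtained by taking $k=n-1$ (so that $\delta_{k,0}=0$), namely
\[
c_{j+1}\, c_{n-1} \;=\; c_j\, c_n \;=\; a\, c_j,\qquad j\in[n-1].
\]
This is the key relation because it is a two-term recursion that fully determines $c_1,\ldots,c_n$ from any one of them, and it naturally splits into two cases.

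In the first case, when $c_{n-1}\neq 0$, I use that $R$ is a field to set $b:=c_{n-1}/c_n\in R$; unrolling the recursion gives $c_k=ab^{n-k}$ for all $k\in[n]$. To pin down $c_0$, I then invoke (\ref{condpol}) with $k=0$ (so $\delta_{k,0}=1$),
\[
c_{j+1}\, c_0 + c_j \;=\; c_j\, c_1,
\]
which for any $j\in[n-1]$ solves to $c_0=b(c_1-1)=-b+ab^n$. Comparing with the expansion computed in the ``if'' direction concludes $p=p_{a,b}$. In the second case, when $c_{n-1}=0$, the above recursion forces $c_j=0$ for all $j\in[n-1]$ (since $a\neq 0$), and then the instance $j=n-1$, $k=0$ of (\ref{condpol}) gives $c_n\, c_0=0$, hence $c_0=0$; so $p=a P_n=p_{a,0}$.

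The only subtle point I anticipate is the clean handling of the degenerate case $c_{n-1}=0$, which corresponds to $b=0$ and which if overlooked would leave a gap where the quotient $c_{n-1}/c_n$ cannot be formed; separating it as above resolves it. Everything else is a bookkeeping exercise on the symmetric coefficients, which is precisely where the field hypothesis is used (to form the quotient defining $b$).
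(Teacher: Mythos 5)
Your proof is correct and follows essentially the same route as the paper: extract from (\ref{condpol}) the relations with one index equal to $n-1$, solve to get $c_k=ab^{n-k}$ and $c_0=ab^n-b$ with $a=c_n$ and $b=c_{n-1}/c_n$, and confirm that these coefficients satisfy (\ref{condpol}) (the paper instead cites the conjugation remark to verify associativity of $p_{a,b}$, a purely cosmetic difference). The only other (harmless) deviation is your case split on $c_{n-1}=0$, which is unnecessary: since $c_n=a\neq 0$ the quotient $b=c_{n-1}/c_n$ is always defined, and the recursion $a\,c_k=c_{n-1}c_{k+1}$ gives $c_k=b\,c_{k+1}$ by dividing by $a$ (not by $c_{n-1}$), so $c_k=ab^{n-k}$ holds uniformly, including when $b=0$.
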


\begin{remark}
The functions $p_{a,b}$ defined in (\ref{pab}) can be written in several equivalent forms. It is easy to see that they are associative since so
are $p_{a,0}$ and $p_{a,b}=\varphi\circ p_{a,0}\circ(\varphi^{-1},\ldots,\varphi^{-1})$ where $\varphi(x)=x-b$.
\end{remark}

\begin{proof}
Since the coefficient $c_n$ in (\ref{condassoc}) is nonzero by Proposition~\ref{linearassoc}, we can set $a=c_n$ and $b=c_{n-1}/a$. Using equation (\ref{condpol}) for $j=n-1$ and $k\geqslant 1$, we obtain $c_k=b\,
c_{k+1}$, that is, $c_k=ab^{n-k}$. Using again equation (\ref{condpol}) for $j=n-1$ and $k=0$, we obtain $c_0=-b+ab^n$. We conclude by observing
that the function $p_{a,b}$ is associative (see remark above).
\end{proof}

We see from the proof of Proposition~\ref{prop:field} that the system of equations (\ref{condpol}) has a unique solution in $\mathrm{Frac}(R)$.
Therefore we can characterize the associative polynomial functions of degree $>1$ as the restrictions to $R$ of nonzero multiples of the
product function, up to an affine transformation in $\mathrm{Frac}(R)$.

\begin{proposition}\label{prop:prod}
Any associative polynomial function $p\colon R^n\to R$ such that $\mathrm{deg}(p)>1$ is of type $(vi)$.
\end{proposition}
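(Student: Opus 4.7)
The plan is to use the explicit recursion (\ref{condpol}) to solve for all coefficients $c_k$ of (\ref{condassoc}) in terms of two parameters $a$ and $b$, and then recognize the resulting polynomial as the type $(vi)$ expression. Since most of the heavy lifting has already been done, the work here is essentially an algebraic computation plus a bookkeeping step to track which quantities must lie in $R$ versus $\mathrm{Frac}(R)$.

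First I would note that, by the previous results, any associative polynomial $p$ of degree $>1$ can be assumed symmetric (Lemma~\ref{lemma:sym}) and written in the form (\ref{condassoc}) with coefficients $c_0,\ldots,c_n\in R$ satisfying (\ref{condpol}); moreover $c_n\neq 0$, because otherwise $c_{[n]}=0$ and then Proposition~\ref{degone} would force $\deg(p)\leqslant 1$. Set $a:=c_n\in R\setminus\{0\}$ and $b:=c_{n-1}/a\in\mathrm{Frac}(R)$.

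Next I would specialize (\ref{condpol}) to $j=n-1$. For $k\in[n-1]$ it reads $c_n c_k=c_{n-1}c_{k+1}$, i.e.\ $c_k=b\, c_{k+1}$; iterating gives $c_k=ab^{n-k}$ for every $k\in[n-1]$, and the same formula holds trivially for $k=n$. For $k=0$ the equation becomes $c_n c_0+c_{n-1}=c_{n-1}c_1=ab\cdot ab^{n-1}$, so $c_0=ab^n-b$. Substituting these coefficients into (\ref{condassoc}) and using the elementary identity
\[
\sum_{k=0}^n ab^{n-k}P_k(\bfx)=a\prod_{i=1}^n(x_i+b),
\]
I obtain $p(\bfx)=-b+a\prod_{i=1}^n(x_i+b)$, which is exactly the form (\ref{pab}) appearing in type $(vi)$.

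Finally I would verify the integrality conditions: the equalities $ab^k=c_{n-k}$ for $k\in[n-1]$ and $ab^n-b=c_0$ show that these quantities lie in $R$ automatically, since the $c_j$ do by assumption. There is no substantial obstacle; the only mild subtlety is that $b=c_{n-1}/c_n$ need not itself belong to $R$, which is precisely why type $(vi)$ is parametrized over $\mathrm{Frac}(R)$ and why the extra integrality clauses must be recorded as part of the statement.
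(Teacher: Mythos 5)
Your proposal is correct and follows essentially the same route as the paper: the paper solves the recursion (\ref{condpol}) with $a=c_n$, $b=c_{n-1}/a$ in the proof of Proposition~\ref{prop:field} and then observes that this computation, carried out in $\mathrm{Frac}(R)$, yields Proposition~\ref{prop:prod}, with the integrality conditions of type $(vi)$ recording that the $c_k$ lie in $R$. Your explicit bookkeeping of $ab^k=c_{n-k}$ and $ab^n-b=c_0$ just makes precise what the paper leaves implicit.
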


\section{Further properties}

We now investigate a few properties of the semigroup structures that we have determined.

\subsection{$n$-ary groups}

After classifying the ternary semigroups defined by polynomial functions, G{\l}azek and Gleichgewicht \cite{GG85} determined the corresponding
ternary groups. Using the Main Theorem, we can also derive a description of the $n$-ary groups defined by polynomial functions. Recall that an
\emph{$n$-ary quasigroup} is given by a nonempty set $G$ and an $n$-ary operation $f\colon G^n\to G$ such that for every $a_1,\ldots,a_n,b\in G$
and every $i\in [n]$ the equation
\begin{equation}\label{coucou1}
f(a_1,\ldots,a_{i-1},z,a_{i+1},\ldots,a_n)=b,
\end{equation}
has a unique solution in $G$. An \emph{$n$-ary group} is then an $n$-ary quasigroup $(G,f)$ that is also an $n$-ary semigroup. Recall also that
in an $n$-ary group, with any element $x$ is associated the element $\overline{x}$, called \emph{skew to $x$}, defined by the equation
$f(x,\ldots,x,\overline{x})=x$.

\begin{proposition}\label{narygroups}
The $n$-ary groups $(R,p)$ defined by polynomial functions $p\colon R^n\to R$ of degree $\leqslant 1$ are of type $(iv)$ with
$\overline{x}=(2-n)x-c$ and type $(v)$ with $\overline{x}=x$.
\end{proposition}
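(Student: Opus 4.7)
The plan is to examine each of the five types $(i)$--$(v)$ of degree $\leqslant 1$ listed in the Main Theorem, determine for which of them $(R,p)$ is an $n$-ary quasigroup, and for those that are, compute the skew element explicitly from the equation $p(x,\ldots,x,\overline{x})=x$. Since associativity is automatic for all five types, the only thing to check is unique solvability of (\ref{coucou1}).

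First I would eliminate the three degenerate types. For type $(i)$, $p(\bfx)=c$, equation (\ref{coucou1}) reduces to $c=b$, which does not involve the unknown $z$; since $R$ is infinite, uniqueness fails. For types $(ii)$ and $(iii)$, namely $p(\bfx)=x_1$ and $p(\bfx)=x_n$, the variable $z$ is absent from (\ref{coucou1}) whenever $i\neq 1$ (respectively $i\neq n$), so again uniqueness fails. None of these yields an $n$-ary group.

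Next, for type $(iv)$, equation (\ref{coucou1}) reads $c+\sum_{j\neq i}a_j+z=b$ and is uniquely solved by $z=b-c-\sum_{j\neq i}a_j\in R$. Hence $(R,p)$ is an $n$-ary group, and solving $c+(n-1)x+\overline{x}=x$ gives $\overline{x}=(2-n)x-c$.

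Finally, for type $(v)$ with $\omega^{n-1}=1$ and $\omega\neq 1$, the coefficient $\omega^{i-1}$ of $z$ in (\ref{coucou1}) is a unit in $R$ (its inverse being $\omega^{n-i}$), so (\ref{coucou1}) is uniquely solvable and $(R,p)$ is again an $n$-ary group. For the skew element, I would use $(\omega-1)(1+\omega+\cdots+\omega^{n-2})=\omega^{n-1}-1=0$; since $R$ is an integral domain and $\omega\neq 1$, this forces $1+\omega+\cdots+\omega^{n-2}=0$, whence the defining equation $\sum_{i=1}^{n-1}\omega^{i-1}x+\omega^{n-1}\overline{x}=x$ simplifies to $\overline{x}=x$.

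The most delicate step is the type $(v)$ analysis, where one must combine $\omega^{n-1}=1$, $\omega\neq 1$, and the integral-domain hypothesis to obtain both the invertibility of each $\omega^{i-1}$ and the vanishing of the truncated geometric sum. Apart from this, the computations are entirely routine and no serious obstacle is expected.
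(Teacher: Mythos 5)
Your proof is correct and takes essentially the same approach as the paper's (very terse) proof: a direct case-by-case check ruling out types $(i)$--$(iii)$, verifying the quasigroup property for types $(iv)$ and $(v)$, and computing the skew elements from $p(x,\ldots,x,\overline{x})=x$. You are simply supplying the details that the paper dismisses as ``immediate'' and ``well known,'' including the one genuinely non-obvious point, namely that $\omega\neq 1$ and $\omega^{n-1}=1$ in an integral domain force $1+\omega+\cdots+\omega^{n-2}=0$.
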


\begin{proof}
We immediately see that the polynomials of types $(i)$--$(iii)$ do not define $n$-ary groups. It is well known that the polynomials of types
$(iv)$ and $(v)$ define $n$-ary groups.\footnote{Polynomial functions of type $(v)$ were already considered by D\"ornte \cite[p.~5]{Do1928} in
the special case of complex numbers.}
\end{proof}

In general, the $n$-ary semigroups $(R,p)$ defined by type $(vi)$ are not $n$-ary groups. In the special case where $R$ is a field, we have the
following immediate result.

\begin{proposition}
If $R$ is a field, the $n$-ary semigroup $(R\setminus\{-b\},p_{a,b})$, where $p_{a,b}$ is defined in (\ref{pab}), is an $n$-ary group. It is
isomorphic to $(R\setminus\{0\},p_{a,0})$.
\end{proposition}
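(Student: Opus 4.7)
The argument consists of three routine checks; the whole statement is essentially computational, since the existence of the isomorphism was already foreshadowed in the remark following Proposition~\ref{prop:field}.

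First, I would verify that $p_{a,b}$ restricts to an $n$-ary operation on $R\setminus\{-b\}$. If $x_i\neq -b$ for every $i\in [n]$, then $x_i+b\neq 0$ for every $i$; since $R$ is a field (in particular has no zero-divisors) and $a\neq 0$, the product $a\prod_{i=1}^n(x_i+b)$ is nonzero, so $p_{a,b}(\bfx)=-b+a\prod_{i=1}^n(x_i+b)\neq -b$. Associativity on $(R\setminus\{-b\})^n$ is simply inherited from the Main Theorem.

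Second, I would establish the quasigroup property (\ref{coucou1}). Fix $i\in [n]$ and elements $a_1,\ldots,a_{i-1},a_{i+1},\ldots,a_n,c\in R\setminus\{-b\}$. The equation $p_{a,b}(a_1,\ldots,a_{i-1},z,a_{i+1},\ldots,a_n)=c$ rearranges to
$$
a(z+b)\prod_{j\neq i}(a_j+b)=c+b.
$$
Since $a\neq 0$ and each $a_j+b\neq 0$, this has the unique solution
$$
z=-b+\frac{c+b}{a\,\prod_{j\neq i}(a_j+b)},
$$
and the hypothesis $c\neq -b$ forces the fraction to be nonzero, so $z\in R\setminus\{-b\}$.

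Third, for the isomorphism I would use the map $\varphi\colon R\to R$, $\varphi(x)=x-b$, already introduced in the remark following Proposition~\ref{prop:field}. It is a bijection of $R$ sending $0$ to $-b$, hence restricts to a bijection $\varphi\colon R\setminus\{0\}\to R\setminus\{-b\}$. The identity $p_{a,b}=\varphi\circ p_{a,0}\circ(\varphi^{-1},\ldots,\varphi^{-1})$ recalled in that remark says exactly that $\varphi$ intertwines $p_{a,0}$ and $p_{a,b}$, and is therefore an isomorphism of the $n$-ary groups $(R\setminus\{0\},p_{a,0})$ and $(R\setminus\{-b\},p_{a,b})$.

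There is no real obstacle; the only subtle point is making sure $-b$ is precisely the element to be excised (neither more nor fewer), which is handled by the first step together with the observation that $-b$ itself is never in the image when all arguments lie in $R\setminus\{-b\}$.
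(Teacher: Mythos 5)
Your proof is correct and fills in exactly the details the paper leaves implicit: the paper states this result without proof (calling it ``immediate''), relying on the conjugation identity $p_{a,b}=\varphi\circ p_{a,0}\circ(\varphi^{-1},\ldots,\varphi^{-1})$ from the remark after Proposition~\ref{prop:field}, which is precisely the isomorphism you exhibit. Your closure and unique-solvability checks are the routine verifications the authors suppress, and they are carried out correctly.
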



\subsection{Medial $n$-ary semigroup structures}

We observe that all the $n$-ary semigroup structures given in the Main Theorem are medial. This is a general fact for functions of degree
$\leqslant 1$ on a commutative ring. This is also immediate for the function $p_{a,b}$ defined in (\ref{pab}) because it is the restriction to
$R$ of an $n$-ary operation that is isomorphic to a nonzero multiple of the $n$-ary product operation on $\mathrm{Frac}(R)$. From this
observation it follows that, for the $n$-ary groups given in Proposition~\ref{narygroups}, the map $x\mapsto\overline{x}$ is an endomorphism.

\subsection{(Ir)reducibility of $n$-ary semigroup structures}

Recall that if $(G,\circ)$ is a semigroup, then there is an obvious way to define an $n$-ary semigroup by $f(x_1,\ldots,x_n)=x_1\circ\cdots\circ
x_n$. In this case, the $n$-ary semigroup $(G,f)$ is said to be \emph{derived} from $(G,\circ)$ or \emph{reducible} to $(G,\circ)$, otherwise it
is said to be \emph{irreducible}. It is clear that the $n$-ary semigroups defined in types $(i)$--$(iii)$ are derived from the corresponding
semigroups. However, the $n$-ary semigroups defined in type $(v)$ are not reducible. Indeed, otherwise we would have $y=y\circ 0\circ\cdots\circ
0$ for all $y\in R,$ and therefore
\[x\circ y=x\circ (y\circ 0\circ\cdots\circ 0)= x\circ (y\circ 0)\circ 0\circ\cdots\circ 0=x+\omega (y\circ 0),\]
for $x,y\in R$, which leads to $x\circ y=x+\omega y+c$, where $c=\omega^2(0\circ 0)$. We know from the Main Theorem that this function does not
define a semigroup. We can prove similarly that the $n$-ary semigroups defined in type $(iv)$ are reducible if and only if $c=(n-1)c_0$ for
$c_0\in R$ and, when $R$ is a field, that the semigroup $(R\setminus\{0\},p_{a,0})$ is derived from a semigroup if and only if $a=a_0^{n-1}$ for
$a_0\in R$.

\section*{Acknowledgments}

This research is supported by the internal research project F1R-MTH-PUL-09MRDO of the University of Luxembourg.


\end{document}